\newtheorem{thm}{Theorem}[section]
\newtheorem{pro}[thm]{Proposition}
\newtheorem{dfn}[thm]{Definition}
\author{Fabio Silva Botelho \\ Department of Mathematics \\  Federal University of Santa Catarina, UFSC \\
Florian\'{o}polis, SC - Brazil}
\title{\bf  On Lagrange multiplier theorems for non-smooth optimization for a large class of variational models in Banach spaces}
\date{}
\begin{document}
\maketitle

\abstract{This article develops optimality conditions for a large class of non-smooth
variational models. The main results are based on standard tools of functional analysis and
calculus of variations. Firstly we address a model with equality constraints and, in a second
step, a more general model with equality and inequality constraints, always in a general Banach
space context. We highlight  the results in general are well known, however, some novelties are introduced related to the proof procedures,
which are in general softer than those concerning the present literature.}
\\
\\
{\bf Key words}: Non-smooth optimization, Lagrange multiplier theorems, Equality and inequality constraints.
\\
\\
{\bf MSC Class}: 49K27

\section{Introduction}
In this article we present  Lagrange multiplier results for non-smooth variational optimization, firstly for an equality constraints model and, in a subsequent step, for a more general problem involving equality and inequality
constraints. We emphasize the results are in general well known, however some novelties are introduced concerning the related proofs. It is also worth mentioning the results are rather general and are suitable  in a Banach space context.

Moreover, the main references for this article are \cite{120,700}. Other important reference is \cite{50}.

We also highlight  specific  details on the function spaces addressed and concerning functional analysis and Lagrange multiplier basic results may be found in \cite{1,510,27,8,50,520,120,700}.

Related subjects are addressed in \cite{51,52}. Specifically in \cite{52}, the authors propose an augmented Lagrangian method for the solution of constrained optimization problems suitable for a large class of variational models.

At this point, we highlight the main novelties mentioned in the abstract are specified in the first three paragraphs of section \ref{ch12} and are
applied in the statements and proofs of Theorems \ref{uk12} and \ref{us12}.

Finally, fundamental results on the calculus of variations are addressed in \cite{1210}.

We start with some preliminary results and  basic definitions.
The first result we present is the Hahn-Banach Theorem in its analytic form.
Concerning our context, we have assumed the hypothesis the space $U$ is a Banach space but indeed such a result is much more general.

\begin{thm}[The Hahn-Banach theorem]\label{2.2.1}
Let $U$ be a Banach space. Consider a functional $p: U \rightarrow \mathbb{R}$ such that
\begin{gather}
p(\lambda u) =\lambda p(u), \forall u \in U, \lambda > 0,
\end{gather}
and
\begin{gather}
p(u+v) \leq p(u)+p(v), \forall u,v \in U.
\end{gather}
Let $V \subset U$ be a proper subspace of  $U$ and let $g:V \rightarrow
\mathbb{R}$ be a linear functional such that
\begin{gather}
g(u) \leq p(u), \forall u \in V.
\end{gather}
Under such hypotheses, there exists a linear functional $f:U \rightarrow \mathbb{R}$
such that
\begin{gather}
g(u)=f(u), \forall u \in V,
\end{gather}
and
\begin{gather}
f(u) \leq  p(u), \forall u \in U.
\end{gather}
\end{thm}
For a proof, please see  \cite{27,120,700}.

Here we introduce the definition of topological dual space.
\begin{dfn}[Topological dual spaces] Let $U$ be a Banach space. We shall define its dual topological space, as the set of all linear continuous functionals defined on $U$.  We  suppose such a dual space of $U$, may be represented by another Banach space $U^*$, through a bilinear form $ \langle \cdot , \cdot \rangle_U: U \times U^*
\rightarrow \mathbb{R}$ (here we are referring  to standard representations of dual spaces of Sobolev and Lebesgue spaces). Thus, given $f:U \rightarrow \mathbb{R}$
linear and continuous, we assume the existence of a unique  $u^* \in U^*$ such that
\begin{gather}
f(u)=\langle u, u^* \rangle_U, \forall u \in U.
\end{gather}
The norm of $f$ , denoted by $\|f\|_{U^*}$, is defined as
\begin{gather}
\|f\|_{U^*}=\sup_{u \in U}\{|\langle u, u^* \rangle_U| \; : \; \|u\|_U \leq 1\}\equiv \|u^*\|_{U^*}.
\end{gather}
\end{dfn}

At this point we present the Hahn-Banach Theorem in its geometric form.

\begin{thm}[The Hahn-Banach theorem, the geometric form]\label{2.2.2.2}
Let $U$ be a Banach space and let $A,B \subset U$ be two non-empty, convex sets such that $A \cap B= \emptyset $ and
$A$ is open. Under such hypotheses, there exists a closed hyperplane which separates $A$ and $B$, that is, there exist $\alpha \in \mathbb{R}$ and $u^* \in U^*$ such that $u^* \neq \mathbf{0}$
and
$$\langle u,u^*\rangle_U \leq \alpha \leq \langle v, u^* \rangle_U,\; \forall u \in A,\; v \in B.$$
\end{thm}
For a proof, please see  \cite{27,120,700}

Another important definition, is the one concerning locally Lipschitz functionals.

\begin{dfn} Let $U$ be a Banach space and let $F:U \rightarrow \mathbb{R}$ be a functional. We say that $F$
is locally Lipschitz   at $u_0 \in U$ if there exist $r>0$ and $K>0$ such that
$$|F(u)-F(v)| \leq K \|u-v\|_U,\; \forall u,v \in B_r(u_0).$$
\end{dfn}
In this definition, we have denoted
$$B_r(u_0)=\{v \in U \;:\; \|u_0-v\|_U < r\}.$$

The next definition is established similarly as those found in the reference \cite{50}. More specifically, it is similar as the definition of generalized directional derivative found in section 10.1 at page 194, in reference \cite{50}.

\begin{dfn} Let $U$ be a Banach space and let $F:U \rightarrow \mathbb{R}$ be a locally Lipschitz  functional
at $u \in U$. Let $\varphi \in U$. Under such statements, we define

$$H_u(\varphi)=\sup_{(\{u_n\},\{t_n\}) \subset U \times \mathbb{R}^+} \left\{\limsup_{n \rightarrow \infty}
\frac{F(u_n+t_n \varphi)-F(u_n)}{t_n}\;:\; u_n \rightarrow u \text{ in } U,\; t_n \rightarrow 0^+\right\}.$$

We also define the generalized local sub-gradient set of $F$ at $u$, denoted by $\partial^0 F(u)$, by
$$\partial^0F(u)=\{ u^* \in U^*\;:\; \langle \varphi,u^*\rangle_U \leq H_u(\varphi),\; \forall \varphi \in U\}.$$
\end{dfn}

We also highlight such a last definition of generalized local sub-gradient is similar to the definition of generalized gradient, which may be found in section 10.13, at page 196, in the book \cite{50}.

In the next lines we prove some relevant auxiliary results.

\begin{pro} Considering the context of the last two definitions, we have
\begin{enumerate}
\item $$H_u(\varphi_1+\varphi_2) \leq H_u(\varphi_1)+H_u(\varphi_2),\; \forall \varphi_1, \varphi_2 \in U.$$

\item $$H_u(\lambda \varphi)=\lambda H_u(\varphi),\; \forall \lambda>0,\; \varphi \in U.$$
\end{enumerate}
\end{pro}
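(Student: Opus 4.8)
The plan is to establish both statements directly from the definition of $H_u$, exploiting the local Lipschitz property of $F$ at $u$ to guarantee that every quantity in sight is finite. First I would record the basic bound: if $u_n \to u$ and $t_n \to 0^+$, then for $n$ large both $u_n$ and $u_n + t_n\varphi$ lie in the ball $B_r(u)$ on which the Lipschitz estimate holds, whence
$$\left| \frac{F(u_n + t_n\varphi) - F(u_n)}{t_n} \right| \le \frac{K\,\|t_n\varphi\|}{t_n} = K\|\varphi\|.$$
Consequently each admissible difference quotient, and hence $H_u(\varphi)$ itself, lies in $[-K\|\varphi\|, K\|\varphi\|]$, so no indeterminate $\infty - \infty$ forms can arise in the manipulations below.

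For the positive homogeneity in item (2), I would fix $\lambda > 0$ and, given an admissible pair $(\{u_n\},\{t_n\})$, set $s_n = \lambda t_n$. Since $\lambda > 0$ is fixed, $s_n \in \mathbb{R}^+$ and $s_n \to 0^+$, so $(\{u_n\},\{s_n\})$ is again admissible; moreover the correspondence $\{t_n\} \mapsto \{s_n\}$ is a bijection on the set of admissible scale-sequences. Writing
$$\frac{F(u_n + t_n\lambda\varphi) - F(u_n)}{t_n} = \lambda\,\frac{F(u_n + s_n\varphi) - F(u_n)}{s_n}$$
and pulling the positive constant $\lambda$ out of the $\limsup$ and then out of the supremum yields $H_u(\lambda\varphi) = \lambda H_u(\varphi)$ at once.

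The subadditivity in item (1) is the substantive part. Here I would start from an arbitrary admissible pair $(\{u_n\},\{t_n\})$ and telescope the difference quotient for $\varphi_1 + \varphi_2$ as
$$\frac{F(u_n + t_n(\varphi_1+\varphi_2)) - F(u_n)}{t_n} = \frac{F(v_n + t_n\varphi_2) - F(v_n)}{t_n} + \frac{F(u_n + t_n\varphi_1) - F(u_n)}{t_n},$$
where $v_n := u_n + t_n\varphi_1$. The key observation is that $v_n \to u$, because $u_n \to u$ and $t_n\varphi_1 \to 0$, so $(\{v_n\},\{t_n\})$ is itself an admissible pair; hence the $\limsup$ of the first summand is bounded above by $H_u(\varphi_2)$, while the $\limsup$ of the second is bounded above by $H_u(\varphi_1)$ directly. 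Applying the inequality $\limsup(a_n + b_n) \le \limsup a_n + \limsup b_n$ (legitimate since all terms are bounded) and then passing to the supremum over all admissible $(\{u_n\},\{t_n\})$ gives $H_u(\varphi_1+\varphi_2) \le H_u(\varphi_1) + H_u(\varphi_2)$.

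The main obstacle I anticipate is purely the bookkeeping in the subadditivity argument: one must verify that the shifted base-point sequence $v_n = u_n + t_n\varphi_1$ still converges to $u$ so that it qualifies as a test sequence for $H_u(\varphi_2)$, and one must confirm that subadditivity of $\limsup$ applies, which is exactly where the uniform bound $K\|\varphi\|$ furnished by the local Lipschitz hypothesis is needed to exclude indeterminate forms. Everything else is a routine unwinding of the definitions.
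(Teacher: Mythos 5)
Your proposal is correct and follows essentially the same route as the paper: the same telescoping decomposition through an intermediate point $v_n = u_n + t_n\varphi_i$ (the paper inserts $\pm F(u_n + t_n\varphi_2)$, you insert $\pm F(u_n + t_n\varphi_1)$, which is immaterial) together with subadditivity of $\limsup$ for item (1), and the same substitution $s_n = \lambda t_n$ for item (2). Your explicit remark that the local Lipschitz bound $|{\cdot}| \le K\|\varphi\|$ rules out indeterminate forms and that $v_n \to u$ keeps the shifted pair admissible is a welcome tightening of details the paper leaves implicit.
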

\begin{proof}

Let $\varphi_1,\varphi_2 \in U.$

Observe that
\begin{eqnarray}
&&H_u(\varphi_1+\varphi_2) \nonumber \\ &=&
\sup_{(\{u_n\},\{t_n\}) \subset U \times \mathbb{R}^+} \left\{\limsup_{n \rightarrow \infty}
\frac{F(u_n+t_n (\varphi_1+\varphi_2))-F(u_n)}{t_n}\;:\; u_n \rightarrow u \text{ in } U,\; t_n \rightarrow 0^+\right\}
\nonumber \\ &=&
\sup_{(\{u_n\},\{t_n\}) \subset U \times \mathbb{R}^+} \left\{\limsup_{n \rightarrow \infty}
\frac{F(u_n+t_n (\varphi_1+\varphi_2)-F(u_u+t_n \varphi_2)+F(u_n+t_n \varphi_2))-F(u_n)}{t_n}\right.
\nonumber \\ &&\;:\;\left. u_n \rightarrow u \text{ in } U,\; t_n \rightarrow 0^+\right\}
\nonumber \\ &\leq& \sup_{(\{v_n\},\{t_n\}) \subset U \times \mathbb{R}^+} \left\{\limsup_{n \rightarrow \infty}
\frac{F(v_n+t_n \varphi_1)-F(v_n)}{t_n}\;:\; v_n \rightarrow u \text{ in } U,\; t_n \rightarrow 0^+\right\}
 \nonumber \\ &&+\sup_{(\{u_n\},\{t_n\}) \subset U \times \mathbb{R}^+} \left\{\limsup_{n \rightarrow \infty}\frac{F(u_n+t_n \varphi_2)-F(u_n)}{t_n}\;:\; u_n \rightarrow u \text{ in } U,\; t_n \rightarrow 0^+\right\}
 \nonumber \\ &=& H_u(\varphi_1)+H_u(\varphi_2).
 \end{eqnarray}

Let $\varphi \in U$ and $\lambda>0$.

Thus,
\begin{eqnarray}
&&H_u(\lambda \varphi) \nonumber \\ &=&
\sup_{(\{u_n\},\{t_n\}) \subset U \times \mathbb{R}^+} \left\{\limsup_{n \rightarrow \infty}
\frac{F(u_n+t_n (\lambda \varphi))-F(u_n)}{t_n}\;:\; u_n \rightarrow u \text{ in } U,\; t_n \rightarrow 0^+\right\}
\nonumber \\ &=&\lambda \sup_{(\{u_n\},\{t_n\}) \subset U \times \mathbb{R}^+} \left\{\limsup_{n \rightarrow \infty}
\frac{F(u_n+t_n (\lambda \varphi))-F(u_n)}{\lambda t_n}\;:\; u_n \rightarrow u \text{ in } U,\; t_n \rightarrow 0^+\right\}\nonumber \\ &=&\lambda \sup_{(\{u_n\},\{\hat{t}_n\}) \subset U \times \mathbb{R}^+} \left\{\limsup_{n \rightarrow \infty}
\frac{F(u_n+\hat{t}_n (\varphi))-F(u_n)}{\hat{t}_n}\;:\; u_n \rightarrow u \text{ in } U,\; \hat{t}_n \rightarrow 0^+\right\} \nonumber \\ &=& \lambda H_u(\varphi).
\end{eqnarray}

The proof is complete.
\end{proof}
\section{ The Lagrange multiplier theorem for equality constraints and non-smooth optimization}\label{ch12}
In this section we state and prove a Lagrange multiplier theorem for non-smooth optimization.
This first one is related to equality constraints.

Here we refer to a related result in the Theorem 10.45 at page 220, in the book \cite{50}. We emphasize that in such a result, in this mentioned book, the author assumes the function which defines the constraints to be continuously differentiable in a neighborhood of the  point in question.

Anyway, in our next result, we do not assume such hypothesis. Indeed, our hypotheses are different and in some sense weaker. More specifically, we assume the continuity of the Frech\'{e}t derivative $G'(u)$ of a concerning constraint $G(u)$ only at the optimal point $u_0$ and not necessarily in a neighborhood, as properly indicated in the next lines.

\begin{thm}\label{uk12} Let $U$ and $Z$ be Banach spaces. Assume $u_0$ is a local minimum of $F(u)$ subject to $G(u)=\theta,$
where $F: U \rightarrow \mathbb{R}$ is locally Lipschitz  at $u_0$ and $G:U \rightarrow Z$ is a  Fr\'{e}chet differentiable transformation such that  $G'(u_0)$  maps $U$ onto $Z$.
Finally, assume  there exist  $\alpha>0$ and $K>0$ such that if $\|\varphi\|_U<\alpha$ then, $$ \|G'(u_0+\varphi)-G'(u_0)\| \leq K \|\varphi\|_U.$$
Under such assumptions, there exists $z^*_0 \in Z^*$ such that $$\theta \in \partial^0F(u_0)+(G'(u_0)^*)(z_0^*),$$ that is, there exist $u^* \in \partial^0F(u_0)$ and $z_0^* \in Z^*$ such that
$$u^*+[G'(u_0)]^*(z_0^*)= \theta,$$
so that,
$$ \langle \varphi, u^* \rangle_U+ \langle G'(u_0) \varphi, z_0^* \rangle_Z=0, \forall \varphi \in U.$$
\end{thm}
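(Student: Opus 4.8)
The plan is to produce a single functional $u^* \in \partial^0 F(u_0)$ that annihilates the kernel $N = \ker G'(u_0)$, and then to realize $-u^*$ as $[G'(u_0)]^*(z_0^*)$ through a duality argument. The difficulty thus splits into two parts: a variational part, where the local minimality of $u_0$ is converted into a one-sided estimate for $H_{u_0}$ along directions tangent to the constraint, and a functional-analytic part, where that estimate is turned into the multiplier via the Hahn-Banach theorem and the closed range theorem.

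First I would show that $H_{u_0}(\varphi) \geq 0$ for every $\varphi \in N$. The key is to produce, for each such $\varphi$ and each small $t>0$, a genuinely feasible point of the form $u_t = u_0 + t\varphi + w(t)$ with $G(u_t) = \theta$ and $\|w(t)\|_U = o(t)$. This is the main obstacle and is exactly where the two hypotheses on $G$ are used. Since $\varphi \in N$ and $G(u_0)=\theta$, the Lipschitz bound on $G'$ near $u_0$ gives, through the integral form of the mean value inequality, the quadratic estimate $\|G(u_0+t\varphi)\|_Z \leq \frac{K}{2}\|\varphi\|_U^2\, t^2$. Because $G'(u_0)$ is onto, the open mapping theorem furnishes a constant $C>0$ and, for each $z \in Z$, an element $R(z)$ with $G'(u_0) R(z) = z$ and $\|R(z)\|_U \leq C\|z\|_Z$. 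I would then run the Newton-type iteration $w_0=0$, $w_{k+1}=w_k - R(G(u_0+t\varphi+w_k))$; using once more the Lipschitz control on $G'$, each residual $G(u_0+t\varphi+w_k)$ contracts geometrically, so the scheme converges to some $w(t)$ solving $G(u_0+t\varphi+w(t))=\theta$, with $\|w(t)\|_U = O(t^2)$. This is essentially Lusternik's theorem, and verifying the contraction and the convergence of the iteration is the part that requires the most care.

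Granting the construction, the minimality of $u_0$ gives $F(u_t) \geq F(u_0)$ for $t$ small, while the local Lipschitz property of $F$ yields $|F(u_0+t\varphi) - F(u_t)| \leq K\|w(t)\|_U = o(t)$. Combining these, $\frac{F(u_0+t\varphi)-F(u_0)}{t} \geq -\frac{K\|w(t)\|_U}{t} \to 0$ as $t\to 0^+$, and testing the definition of $H_{u_0}$ with the constant sequence $u_n = u_0$ and a suitable $t_n \to 0^+$ gives $H_{u_0}(\varphi) \geq 0$. Since $N$ is a subspace, the zero functional $\ell \equiv 0$ on $N$ satisfies $\ell(\varphi) = 0 \leq H_{u_0}(\varphi)$ there, and $H_{u_0}$ is sublinear by the preceding Proposition. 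The Hahn-Banach theorem then extends $\ell$ to a linear $u^*$ on $U$ with $\langle \varphi, u^*\rangle_U \leq H_{u_0}(\varphi)$ for all $\varphi \in U$ and $u^*|_N = 0$; the local Lipschitz bound $H_{u_0}(\varphi) \leq K\|\varphi\|_U$ forces $\|u^*\|_{U^*} \leq K$, so $u^* \in U^*$. By definition $u^* \in \partial^0 F(u_0)$, and by construction $u^*$ lies in the annihilator $N^{\perp}$.

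It remains to convert membership in $N^{\perp}$ into the multiplier. Since $G'(u_0)$ maps $U$ onto $Z$, its range is closed, so the closed range theorem gives $\mathrm{Range}\left([G'(u_0)]^*\right) = (\ker G'(u_0))^{\perp} = N^{\perp}$. As $-u^* \in N^{\perp}$, there exists $z_0^* \in Z^*$ with $[G'(u_0)]^*(z_0^*) = -u^*$, whence $u^* + [G'(u_0)]^*(z_0^*) = \theta$ with $u^* \in \partial^0 F(u_0)$. Pairing this identity with an arbitrary $\varphi \in U$ and using $\langle \varphi, [G'(u_0)]^*(z_0^*)\rangle_U = \langle G'(u_0)\varphi, z_0^*\rangle_Z$ then yields the stated relation, completing the argument.
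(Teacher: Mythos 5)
Your proposal is correct and follows essentially the same route as the paper: establish $H_{u_0}(\varphi)\geq 0$ on $N(G'(u_0))$ via a Lusternik-type feasible correction, extend the zero functional on the kernel by Hahn--Banach under the sublinear majorant $H_{u_0}$, and recover $z_0^*$ from $N(G'(u_0))^{\perp}=R([G'(u_0)]^*)$ via the closed range theorem. The only differences are cosmetic: you spell out the Newton iteration where the paper cites Theorem 11.3.2 of its reference, and your difference-quotient splitting (comparing $F(u_0+t\varphi)$ with $F(u_t)$ from the fixed base point $u_0$) is a slightly simpler variant of the paper's splitting about the moving base point $u_0+t^2\psi_0(t)$.
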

\begin{proof}
Let $\varphi \in U$ be such that $$G'(u_0) \varphi= \theta.$$

From the proof of Theorem 11.3.2 at page 292, in \cite{120}, there exist $\varepsilon_0>0$, $K_1>0$ and
$$\{\psi_0(t),\; 0<|t|< \varepsilon_0\}\subset U$$ such that

$$\|\psi_0(t)\|_U \leq K_1,\; \forall 0<|t|< \varepsilon_0,$$
and
$$G(u_0+t \varphi+t^2 \psi_0(t))=\theta,\; \forall 0<|t|<\varepsilon_0.$$

From this and the hypotheses on $u_0$, there exists $0< \varepsilon_1< \varepsilon_0$ such that

$$F(u_0+t \varphi+t^2 \psi_0(t)) \geq F(u_0),\; \forall 0<|t|< \varepsilon_1,$$

so that
$$\frac{F(u_0+t \varphi+t^2 \psi_0(t))-F(u_0)}{t} \geq 0,\; \forall 0<t< \varepsilon_1.$$

Hence,

\begin{eqnarray}
&&0 \leq \frac{F(u_0+t \varphi+t^2 \psi_0(t))-F(u_0)}{t} \nonumber \\ &=&
\frac{F(u_0+t \varphi+t^2 \psi_0(t))-F(u_0+t^2\psi_0(t))+F(u_0+t^2\psi_0(t))-F(u_0)}{t}
\nonumber \\ &\leq& \frac{F(u_0+t \varphi+t^2 \psi_0(t))-F(u_0+t^2\psi_0(t))}{t}+K t\|\psi_0(t)\|_U,\;\forall 0<t< \min\{r,\varepsilon_1\}.
\end{eqnarray}

From this, we obtain
\begin{eqnarray}
&&0 \leq \limsup_{t \rightarrow 0^+}\frac{F(u_0+t \varphi+t^2 \psi_0(t))-F(u_0)}{t}
\nonumber \\ &=& \limsup_{t \rightarrow 0^+}\frac{F(u_0+t \varphi+t^2 \psi_0(t))-F(u_0+t^2\psi_0(t))+F(u_0+t^2\psi_0(t))-F(u_0)}{t}\nonumber \\
&\leq& \limsup_{t \rightarrow 0^+}\frac{F(u_0+t \varphi+t^2 \psi_0(t))-F(u_0+t^2\psi_0(t))}{t}+\limsup_{t \rightarrow 0^+}K t\|\psi_0(t)\|_U \nonumber \\ &=&
\limsup_{t \rightarrow 0^+}\frac{F(u_0+t \varphi+t^2 \psi_0(t))-F(u_0+t^2\psi_0(t))}{t}
\nonumber \\ &\leq& H_{u_0}(\varphi).
\end{eqnarray}

Summarizing,
$$H_{u_0}(\varphi) \geq 0,\; \forall \varphi \in N(G'(u_0)).$$

Hence,
$$H_{u_0}(\varphi) \geq 0=\langle \varphi, \theta \rangle_U,\; \forall \varphi \in N(G'(u_0)).$$

From the Hahn-Banach Theorem, the functional $$f\equiv 0$$ defined on $N(G'(u_0))$ may be extended to $U$ through a linear functional $f_1:U \rightarrow \mathbb{R}$ such that
$$f_1(\varphi)=0,\; \forall \varphi \in N[G'(u_0)]$$ and
$$f_1(\varphi) \leq H_{u_0}(\varphi),\; \forall \varphi \in U.$$

Since from the local Lipschitz property $H_{u_0}$ is bounded, so is $f_1$.

Therefore, there exists $u^* \in U^*$ such that
$$ f_1(\varphi)=\langle \varphi,u^* \rangle_U \leq H_{u_0}(\varphi), \forall \varphi \in U,$$
so that $$u^* \in \partial^0F(u_0).$$

Finally, observe that $$\langle \varphi,u^* \rangle_U=0,\; \forall \varphi \in N(G'(u_0)).$$

Since $G'(u_0)$ is onto (closed range), from a well known result for linear operators, we have that
$$u^* \in R[G'(u_0)^*].$$

Thus, there exists, $z_0^* \in Z^*$ such that
 $$u^*=[G'(u_0)^*](-z_0^*),$$ so that
 $$u^*+[G'(u_0)^*](z_0^*)=\theta.$$

 From this, we obtain
 $$\langle \varphi,u^* \rangle_U+\langle \varphi,[ G'(u_0)^*](z_0^*) \rangle_U=0,$$
 that is,
 $$\langle \varphi,u^* \rangle_U+\langle G'(u_0) \varphi,(z_0^*) \rangle_Z=0,\; \forall \varphi \in U$$

 The proof is complete.

 \end{proof}
 \section{The Lagrange multiplier theorem for equality and inequality constraints for non-smooth optimization}
In this section we develop a rigorous result concerning the Lagrange multiplier theorem for the case involving equalities and
inequalities.
\begin{thm}\label{us12}Let $U,Z_1,Z_2$ be Banach spaces. Consider a cone $C$ in $Z_2$ (as  specified at Theorem 11.1 in \cite{120})  such that if $z_1 \leq \theta$
and $z_2 < \theta$ then $z_1+z_2 < \theta$, where $z \leq \theta$ means that $z \in -C$ and $z<\theta$ means that
$z \in (-C)^\circ$. The concerned order is supposed to be also that if $z< \theta$, $z^* \geq \theta^*$ and $z^* \neq \theta$ then
 $\langle z,z^* \rangle_{Z_2}<0.$ Furthermore, assume $u_0 \in U$ is a point of
local minimum for  $F:U \rightarrow \mathbb{R}$ subject to $G_1(u)=\theta$ and $G_2(u) \leq \theta$, where $G_1:U \rightarrow Z_1$,
$G_2:U \rightarrow Z_2$ are Fr\'{e}chet differentiable transformations and $F$ locally Lipschitz  at $u_0 \in U$. Suppose also $G_1'(u_0)$ is onto  and that there exist
$\alpha>0, K>0$ such that if $\|\varphi\|_U< \alpha$ then
$$\|G'_1(u_0+\varphi)-G'_1(u_0)\| \leq K\|\varphi\|_U.$$

Finally, suppose there exists $\varphi_0 \in U$ such that $$G_1'(u_0)\cdot \varphi_0=\theta$$ and
$$G_2'(u_0) \cdot \varphi_0 < \theta.$$

Under such hypotheses, there exists  a Lagrange multiplier $z_0^*=(z_1^*,z_2^*) \in Z_1^* \times Z_2^*$ such that
$$\theta \in \partial^0F(u_0)+[G_1'(u_0)^*](z_1^*)+[G_2'(u_0)^*](z_2^*),$$
$$z_2^* \geq \theta^*,$$ and
$$\langle G_2(u_0),z_2^* \rangle_{Z_2}=0,$$
that is, there exists $u^* \in \partial^0F(u_0)$  and a Lagrange multiplier $z_0^*=(z_1^*,z_2^*) \in Z_1^* \times Z_2^*$ such that
$$u^*+[G_1'(u_0)]^*(z_1^*)+[G_2'(u_0)]^*(z_2^*)=\theta,$$
so that $$\langle \varphi,u^*\rangle_U+\langle \varphi, G_1'(u_0)^*(z_1^*)\rangle_U+\langle \varphi, G_2'(u_0)^*(z_2^*)\rangle_U=0,$$
that is,
$$\langle \varphi,u^*\rangle_U+\langle G_1'(u_0)\varphi, z_1^*\rangle_{Z_1}+\langle G_2'(u_0)\varphi, z_2^*\rangle_{Z_2}=0,\; \forall \varphi \in U.$$
\end{thm}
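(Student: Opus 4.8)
The plan is to deduce everything from a single separation argument in the product space $\mathbb{R}\times Z_1\times Z_2$. First I record two facts. By items 1 and 2 of the Proposition, $H_{u_0}$ is sublinear (subadditive and positively homogeneous), hence convex; and the local Lipschitz property gives $|F(u_n+t_n\varphi)-F(u_n)|\le K t_n\|\varphi\|_U$ for $u_n$ near $u_0$ and $t_n$ small, so that $|H_{u_0}(\varphi)|\le K\|\varphi\|_U$ for all $\varphi$. The target of the whole argument is the inequality
$$H_{u_0}(\varphi)+\langle G_1'(u_0)\varphi,z_1^*\rangle_{Z_1}+\langle G_2'(u_0)\varphi,z_2^*\rangle_{Z_2}\ge 0,\quad\forall\varphi\in U,\qquad(\star)$$
for suitable $z_1^*\in Z_1^*$ and $z_2^*\ge\theta^*$, together with the complementarity $\langle G_2(u_0),z_2^*\rangle_{Z_2}=0$. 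Once $(\star)$ holds, the conclusion follows exactly as at the end of Theorem \ref{uk12}: the linear functional $\varphi\mapsto-\langle\varphi,[G_1'(u_0)]^*(z_1^*)+[G_2'(u_0)]^*(z_2^*)\rangle_U$ is dominated by $H_{u_0}$ and, being thus bounded, is represented by some $u^*\in U^*$ with $\langle\varphi,u^*\rangle_U\le H_{u_0}(\varphi)$; that is, $u^*\in\partial^0F(u_0)$ and $u^*+[G_1'(u_0)]^*(z_1^*)+[G_2'(u_0)]^*(z_2^*)=\theta$.

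To produce $(\star)$, I would introduce the convex set
$$A=\left\{(r,z_1,z_2)\in\mathbb{R}\times Z_1\times Z_2:\ \exists\,\varphi\in U,\ H_{u_0}(\varphi)<r,\ G_1'(u_0)\varphi=z_1,\ G_2(u_0)+G_2'(u_0)\varphi-z_2<\theta\right\}.$$
Convexity follows from the sublinearity of $H_{u_0}$, the linearity of $G_1'(u_0)$ and $G_2'(u_0)$, and the convexity of $(-C)^\circ$. The set $A$ has nonempty interior: the $r$-slot and the $z_2$-slot are open by construction (the latter since $(-C)^\circ$ is open), while the surjectivity of $G_1'(u_0)$ together with the open mapping theorem provides the freedom to move $z_1$ in a full neighbourhood. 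The decisive use of optimality is the claim that $(0,\theta,\theta)\notin A$, hence not in $\mathrm{int}(A)$, i.e. that there is no $\varphi$ with $H_{u_0}(\varphi)<0$, $G_1'(u_0)\varphi=\theta$ and $G_2(u_0)+G_2'(u_0)\varphi<\theta$. I would prove this by contradiction, reusing the correction technique of Theorem \ref{uk12}: Theorem 11.3.2 of \cite{120} furnishes $\psi_0(t)$ with $\|\psi_0(t)\|_U\le K_1$ and $G_1(u_0+t\varphi+t^2\psi_0(t))=\theta$ for small $t>0$; the inequality constraint persists because $G_2(u_0)+tG_2'(u_0)\varphi<\theta$ is a convex combination of $G_2(u_0)\le\theta$ and a point of $(-C)^\circ$ (here the cone hypothesis that $z_1\le\theta$, $z_2<\theta$ imply $z_1+z_2<\theta$ is used), and the Fréchet remainder is absorbed since $(-C)^\circ$ is open; finally $\limsup_{t\to0^+}[F(u_0+t\varphi+t^2\psi_0(t))-F(u_0)]/t\le H_{u_0}(\varphi)<0$ forces $F$ to strictly decrease along this admissible path, contradicting local minimality.

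With $(0,\theta,\theta)\notin\mathrm{int}(A)$, the geometric Hahn-Banach theorem yields a nonzero $(\mu,z_1^*,z_2^*)\in\mathbb{R}\times Z_1^*\times Z_2^*$ with $\mu r+\langle z_1,z_1^*\rangle_{Z_1}+\langle z_2,z_2^*\rangle_{Z_2}\ge0$ for all $(r,z_1,z_2)\in A$. Letting $r\to+\infty$ gives $\mu\ge0$; adding elements of $C$ to the $z_2$-slot, which keeps the triple in $A$, gives $\langle c,z_2^*\rangle_{Z_2}\ge0$ for all $c\in C$, i.e. $z_2^*\ge\theta^*$; and taking $\varphi=0$ yields $\langle G_2(u_0),z_2^*\rangle_{Z_2}\ge0$, which combined with $G_2(u_0)\le\theta$ and $z_2^*\ge\theta^*$ forces the complementarity $\langle G_2(u_0),z_2^*\rangle_{Z_2}=0$. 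Approaching the boundary of $A$ in the $r$- and $z_2$-slots and using this complementarity then gives $\mu H_{u_0}(\varphi)+\langle G_1'(u_0)\varphi,z_1^*\rangle_{Z_1}+\langle G_2'(u_0)\varphi,z_2^*\rangle_{Z_2}\ge0$ for all $\varphi$.

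The main obstacle is the normalization $\mu>0$, which is precisely where the Slater-type hypothesis enters. Assuming $\mu=0$, the displayed inequality evaluated at $\varphi_0$ gives $\langle G_2'(u_0)\varphi_0,z_2^*\rangle_{Z_2}\ge0$; but $G_2'(u_0)\varphi_0<\theta$ and $z_2^*\ge\theta^*$ force, by the order hypothesis that $z<\theta$, $z^*\ge\theta^*$, $z^*\ne\theta$ imply $\langle z,z^*\rangle<0$, the equality $z_2^*=\theta$. The residual inequality $\langle G_1'(u_0)\varphi,z_1^*\rangle_{Z_1}\ge0$ for all $\varphi$, upon replacing $\varphi$ by $-\varphi$ and using that $G_1'(u_0)$ is onto, forces $z_1^*=\theta$ as well, so $(\mu,z_1^*,z_2^*)=(0,\theta,\theta)$, contradicting nontriviality of the separating functional. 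Hence $\mu>0$; dividing through by $\mu$ produces exactly $(\star)$, and the proof is completed as in the first paragraph.
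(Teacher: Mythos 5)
Your proposal is correct and rests on the same engine as the paper's proof: a separation argument in $\mathbb{R}\times Z_1\times Z_2$ applied to a convex set built from $H_{u_0}$ and the linearized constraints, with the correction function $\psi_0(t)$ of Theorem 11.3.2 of \cite{120} supplying feasible curves for the equality constraint, and the Slater point $\varphi_0$ ruling out a degenerate multiplier. The differences are in the bookkeeping, and they are genuine simplifications. The paper parametrizes its set $A$ by $(\varphi,r,v,\lambda)$ with $r\ge 0$, $v\le\theta$, $\lambda\ge 0$, and must first prove $H_{u_0}(\varphi)\ge 0$ on the whole linearized feasible cone; since those directions satisfy only the non-strict inequality $v\le\theta$, the paper is forced to perturb toward the Slater point via $\varphi_\alpha=\alpha\varphi_0+(1-\alpha)\varphi$ (with $\alpha=t$) to keep the curve strictly feasible for $G_2$. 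Your $A$ is defined by strict inequalities, so it is open, only the single point $(0,\theta,\theta)$ needs to be excluded, and the strict condition $G_2(u_0)+G_2'(u_0)\varphi<\theta$ already provides the room to absorb the Fr\'{e}chet remainder --- no convex combination with $\varphi_0$ is needed there, and $\varphi_0$ enters only at the very end to force $\mu>0$. You also extract the complementarity $\langle G_2(u_0),z_2^*\rangle_{Z_2}=0$ from the $\varphi=\theta$ slice of $A$ (letting $z_2\rightarrow G_2(u_0)$ through $G_2(u_0)+C^{\circ}$) rather than from the paper's auxiliary parameter $\lambda$ multiplying $G_2(u_0)$; both are valid, and each uses $z_2^*\ge\theta^*$ together with $G_2(u_0)\le\theta$ for the reverse inequality. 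The one place where your write-up is terser than it should be is the claim that $A$ has nonempty interior via the open mapping theorem: to perturb the $z_1$-slot you must also control the induced change in $H_{u_0}(\varphi)$ and in the $z_2$-condition, which works because $H_{u_0}$ is Lipschitz (subadditivity plus the bound $|H_{u_0}|\le K\|\cdot\|_U$) and $(-C)^{\circ}$ is open --- worth one explicit line, but not a gap.
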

\begin{proof}
Let $\varphi \in U$ be such that
$$G_1'(u_0) \cdot \varphi=\theta$$ and
$$G_2'(u_0) \cdot \varphi=v-\lambda G_2(u_0),$$
for some $v \leq \theta$ and $\lambda \geq 0$.

For $\alpha \in (0,1)$ define
$$\varphi_\alpha=\alpha \varphi_0+(1-\alpha)\varphi.$$

Observe that $G_1(u_0)=\theta$ and $G_1'(u_0) \cdot \varphi_\alpha=\theta$ so that as in the proof of the Lagrange multiplier Theorem 11.3.2 in \cite{120},
we may find $K_1>0$, $\varepsilon>0$ and $\psi_0^\alpha(t)$ such that
$$G_1(u_0+t\varphi_\alpha+t^2\psi_0^\alpha(t))=\theta,\; \forall |t|< \varepsilon, \forall \alpha \in (0,1)$$
and
$$\|\psi_0^\alpha(t)\|_U < K_1, \forall |t|< \varepsilon, \; \forall \alpha \in (0,1).$$
Observe that \begin{eqnarray}\label{us569}
&&G_2'(u_0)\cdot \varphi_\alpha \nonumber\\ &=& \alpha G_2'(u_0) \cdot \varphi_0+(1-\alpha)G_2'(u_0)\cdot \varphi \nonumber
\\ &=& \alpha G_2'(u_0) \cdot \varphi_0+(1-\alpha)(v-\lambda G_2(u_0))\nonumber \\ &=&
\alpha G_2'(u_0) \cdot \varphi_0+(1-\alpha)v-(1-\alpha)\lambda G_2(u_0)) \nonumber \\ &=& v_0-\lambda_0 G_2(u_0), \end{eqnarray}
where,
$$\lambda_0=(1-\alpha)\lambda,$$ and
$$v_0=\alpha G_2'(u_0) \cdot \varphi_0+(1-\alpha)v< \theta.$$

Hence, for $t>0$
$$G_2(u_0+t \varphi_\alpha+t^2\psi_0^\alpha(t))=G_2(u_0)+G_2'(u_0)\cdot (t\varphi_\alpha +t^2\psi_0^\alpha(t))+r(t),$$
where $$\lim_{t \rightarrow 0^+} \frac{\|r(t)\|}{t}=0.$$

Therefore from (\ref{us569}) we obtain
$$G_2(u_0+t \varphi_\alpha+t^2\psi_0^\alpha(t))=G_2(u_0)+tv_0-t\lambda_0G_2(u_0)+r_1(t),$$
where $$\lim_{t \rightarrow 0^+} \frac{\|r_1(t)\|}{t}=0.$$
Observe that there exists $\varepsilon_1>0$ such that if $0<t < \varepsilon_1< \varepsilon,$ then
$$v_0+\frac{r_1(t)}{t}< \theta,$$
and
$$G_2(u_0)-t\lambda_0G_2(u_0)=(1-t \lambda_0)G_2(u_0) \leq \theta.$$

Hence $$G_2(u_0+t \varphi_\alpha+t^2\psi_0^\alpha(t)) < \theta, \; \text{ if } 0<t < \varepsilon_1.$$

From this there exists $0<\varepsilon_2< \varepsilon_1$ such that
$$
F(u_0+t \varphi_\alpha+t^2\psi_0^\alpha(t))\geq F(u_0), \forall 0<t< \varepsilon_2,\; \alpha \in (0,1).$$

In particular
$$F(u_0+t \varphi_t+t^2\psi_0^t(t))\geq F(u_0), \forall 0<t< \min\{1,\varepsilon_2\},$$
so that
$$\frac{F(u_0+t \varphi_t+t^2\psi_0^t(t))- F(u_0)}{t} \geq 0,\; \forall 0<t< \min\{1,\varepsilon_2\},$$
that is,
$$\frac{F(u_0+t \varphi+t^2(\psi_0^t(t)+\varphi_0-\varphi))- F(u_0)}{t} \geq 0,\; \forall 0<t< \min\{1,\varepsilon_2\}.$$

From this we obtain,
\begin{eqnarray}
&&0 \leq \limsup_{t \rightarrow 0^+}\frac{F(u_0+t \varphi+t^2(\psi_0^t(t)+\varphi_0-\varphi))-F(u_0)}{t}
\nonumber \\ &=& \limsup_{t \rightarrow 0^+}\left(\frac{F(u_0+t \varphi+t^2 (\psi_0^t(t)+\varphi_0-\varphi))-F(u_0+t^2(\psi_0^t(t)+\varphi_0-\varphi))}{t}\right.
\nonumber \\ && \left.+\frac{F(u_0+t^2(\psi_0^t(t)+\varphi_0-\varphi))-F(u_0)}{t}\right)\nonumber \\
&\leq& \limsup_{t \rightarrow 0^+}\frac{F(u_0+t \varphi+t^2 (\psi_0^t(t)+\varphi_0-\varphi))-F(u_0+t^2(\psi_0^t(t)+\varphi_0-\varphi))}{t}
\nonumber \\ &&+\limsup_{t \rightarrow 0^+}K t\|\psi_0^t(t)+\varphi_0-\varphi\|_U \nonumber \\ &=&
\limsup_{t \rightarrow 0^+}\frac{F(u_0+t \varphi+t^2 (\psi_0^t(t)+\varphi_0-\varphi))-F(u_0+t^2(\psi_0^t(t)+\varphi_0-\varphi))}{t}
\nonumber \\ &\leq& H_{u_0}(\varphi).
\end{eqnarray}

Summarizing, we have $$H_{u_0}(\varphi) \geq 0,$$
if
$$G_1'(u_0)\cdot \varphi=\theta,$$
and
$$G_2'(u_0) \cdot \varphi=v-\lambda G_2(u_0),$$
for some $v \leq \theta$ and $\lambda \geq 0.$

Define \begin{eqnarray}A&=&\{H_{u_0}(\varphi)+r, G_1'(u_0)\cdot \varphi, G_2'(u_0)\varphi-v+\lambda G_2(u_0)), \nonumber \\ &&
\varphi \in U, \;r \geq 0, v \leq \theta, \lambda \geq 0\}. \end{eqnarray}
From the convexity of $H_{u_0}$ (and the hypotheses on $G_1'(u_0)$ and $G_2'(u_0)$) we have that $A$ is a convex set (with a non-empty interior).

If $$G_1'(u_0)\cdot  \varphi=\theta,$$
and
$$G_2'(u_0)\cdot \varphi -v+\lambda G_2(u_0)=\theta,$$
with $v \leq \theta$ and $\lambda \geq 0$
then $$H_{u_0}(\varphi)\geq 0,$$ so that
$$H_{u_0}(\varphi)+r\geq 0,\; \forall r \geq 0.$$

From this and $$H_{u_0}(\theta)=0,$$ we have that $(0,\theta,\theta)$ is on the boundary of $A$. Therefore, by the Hahn-Banach theorem,
geometric form, there exists $$(\beta,z_1^*,z_2^*) \in \mathbb{R} \times Z_1^* \times Z_2^*$$
such that $$(\beta,z_1^*,z_2^*) \neq (0,\theta,\theta)$$ and
\begin{eqnarray}\label{eq35}\beta(H_{u_0}(\varphi)+r)&+&\langle G_1'(u_0)\cdot\varphi, z_1^* \rangle_{Z_1}
\nonumber \\ &+&\langle G_2'(u_0)\cdot \varphi-v+\lambda G_2(u_0),z_2^* \rangle_{Z_2} \geq 0, \end{eqnarray}
$\forall \;\varphi \in U, \;r \geq 0, \;v \leq \theta, \; \lambda \geq 0.$
Suppose $\beta=0$. Fixing all variable except $v$ we get $z_2^* \geq \theta$. Thus, for $\varphi=c \varphi_0$ with arbitrary $c \in \mathbb{R}$, $v=\theta, \lambda=0,$ if $z_2^* \neq \theta,$ then $\langle G_2'(u_0) \cdot \varphi_0,z_2^*\rangle_{Z_2}<0$ so that, letting $c \rightarrow +\infty,$ we get a contradiction through (\ref{eq35}), so that $z_2^*=\theta.$ Since $G_1'(u_0)$ is onto, a similar
reasoning lead us to $z_1^*=\theta$, which contradicts $(\beta,z_1^*,z_2^*) \neq (0,\theta,\theta).$

Hence, $\beta \neq 0,$ and fixing all variables except  $r$ we obtain $\beta>0$. There is no loss of generality in assuming $\beta=1$.

Again fixing all variables except  $v$, we obtain $z_2^* \geq \theta.$ Fixing all variables except  $\lambda$, since $G_2(u_0) \leq \theta$
we obtain $$\langle G_2(u_0), z_2^* \rangle_{Z_2}=0.$$

Finally, for $r=0,\; v=\theta, \;\lambda=0,$ we get
$$ H_{u_0}(\varphi)+\langle G_1'(u_0) \varphi, z_1^* \rangle_{Z_1}
+\langle G_2'(u_0)\cdot \varphi,z_2^* \rangle_{Z_2}\geq 0=\langle \varphi,\theta\rangle_U,\; \forall \varphi \in U.$$

From this,
$$ \theta \in \partial^0(F(u_0)+\langle G_1(u_0), z_1^* \rangle_{Z_1}
+\langle G_2(u_0),z_2^* \rangle_{Z_2})=\partial^0F(u_0)+[G_1'(u_0)^*](z_1^*)+[G_2'(u_0)^*](z_2^*),$$
so that there exists $u^* \in \partial^0F(u_0),$ such that $$u^*+[G_1'(u_0)^*](z_1^*)+[G_2'(u_0)^*](z_2^*)=\theta,$$
so that $$\langle \varphi,u^*\rangle_U+\langle \varphi, G_1'(u_0)^*(z_1^*)\rangle_U+\langle \varphi, G_2'(u_0)^*(z_2^*)\rangle_U=0,$$
that is,
$$\langle \varphi,u^*\rangle_U+\langle G_1'(u_0)\varphi, z_1^*\rangle_{Z_1}+\langle G_2'(u_0)\varphi, z_2^*\rangle_{Z_2}=0,\; \forall \varphi \in U.$$
The proof is complete.
\end{proof}

\section{Conclusion} In this article we have presented a survey on Lagrange multipliers theorems for non-smooth variational optimization in a general Banach space context. The results are based on standard tools of functional analysis, calculus of variations and optimization.

We emphasize, in the present article, no hypotheses concerning convexity are assumed and the results indeed are valid for such a more general Banach space context.

\end{document}